
\documentclass[11pt,a4paper]{article}
\usepackage{amssymb}
\usepackage{amsfonts}


\newtheorem{theorem}{Theorem}

\newtheorem{definition}[theorem]{Definition}

\newtheorem{lemma}[theorem]{Lemma}

\newtheorem{proposition}[theorem]{Proposition}

\newenvironment{proof}[1][Proof]{\noindent\textbf{#1.} }{\ \rule{0.5em}{0.5em}}
\begin{document}

\title{The Khavinson-Shapiro conjecture for domains with a boundary
consisting of algebraic hypersurfaces }
\date{}
\author{Hermann Render \\
\noindent School of Mathematics and Statistics, \noindent University College
Dublin, \\
Belfield, Dublin 4, Ireland.}
\maketitle

\begin{abstract}
The Khavinson-Shapiro conjecture states that ellipsoids are the only bounded
domains in euclidean space satisfying the following property (KS): the
solution of the Dirichlet problem for polynomial data is polynomial. In this
paper we show that a domain does not have property (KS) provided the
boundary contains at least three differrent irreducible algebraic
hypersurfaces for which two of them have a common point.
\end{abstract}

\section{Introduction}

\footnotetext[1]{%
2010 \textit{Mathematics Subject Classification } 31B05; 35J05. Keywords and
phrases: Dirichlet problem, harmonic extension, Khavinson-Shapiro conjecture.%
}

A complex-valued function $h$ defined on an open set $\Omega $ in $\mathbb{R}%
^{d}$ is called \emph{harmonic} if $h$ is differentiable of order $2$ and
satisfies the equation $\Delta h\left( x\right) =0$ for all $x\in \Omega $
where 
\[
\Delta =\frac{\partial ^{2}}{\partial x_{1}^{2}}+\cdots +\frac{\partial ^{2}%
}{\partial x_{d}^{2}} 
\]%
is the Laplacian. The \emph{Khavinson-Shapiro conjecture }in \cite{KhSh92}
states that ellipsoids are the only bounded domains $\Omega $ in $\mathbb{R}%
^{d}$ with the following property:

\begin{itemize}
\item[(KS)] For any polynomial $p$ there exists a harmonic polynomial $h$
such that $h\left( \xi\right) =p\left( \xi\right) $ for all $\xi\in
\partial\Omega.$
\end{itemize}

Obviously a domain $\Omega $ has property (KS) if and only if the Dirichlet
problem for polynomial data (restricted to the boundary) has polynomial
solutions; for more details on the Dirichlet problem we refer the reader to 
\cite{ArGa01} and \cite{Gard93}. The Khavinson-Shapiro conjecture has been
confirmed for large classes of domains but it is still unproven in its full
generality, and we refer the interested reader to \cite{HaSh94}, \cite%
{Rend08}, \cite{Lund}, \cite{KhLu10}, \cite{LuRe10}, \cite{KhLu14} and for
further ramifications in \cite{KhSt} originating from the work in \cite%
{PutSty}.

For certain classes of domains $\Omega $ one can prove that the solution of
the Dirichlet problem for the polynomial 
\[
\left\vert x\right\vert ^{2}=x_{1}^{2}+\cdots +x_{d}^{2} 
\]%
develops a singularity in $\mathbb{R}^{d},$ in particular it is not a
polynomial and $\Omega $ does not have property (KS). Instructive is a
beautiful result of P. Ebenfeld in \cite{Eben92} for the "TV-screen" domain 
\[
\Omega =\left\{ \left( x,y\right) \in \mathbb{R}^{2}:x^{4}+y^{4}<1\right\} 
\]%
where the solution of the Dirichlet problem $u\left( x,y\right) $ for the
data function $x^{2}+y^{2}$ can be extended to a real-analytic function on $%
\mathbb{R}^{2}\setminus D$ where $D$ is a countable discrete set. Further
illuminating examples of domains satisfying the Khavinson-Shapiro conjecture
have been provided by E. Lundberg in \cite{Lund} using lightning bolts as a
mayor technique.

In this paper we want to discuss the Khavinson-Shapiro conjecture for
domains which consists of finitely many algebraic hypersurfaces. The
simplest example of a such domain is a polygonal domain in the plane. E.A.
Volkov proved in \cite{Volk99} the following result: \emph{An equilateral
triangle is the unique polygon }$D$\emph{\ in the plane xy on which the
solution of the boundary problem }%
\begin{equation}
\Delta u=1\hbox{ on }D\hbox{ and }u=0\hbox{ on }\partial D  \label{eqvol}
\end{equation}%
\emph{is an algebraic polynomial, and moreover, the degree of this
polynomial is equal to }$3.$ Note that for a solution $u$ of (\ref{eqvol})
the function $v\left( x,y\right) =x^{2}+y^{2}-4u\left( x,y\right) $ is
harmonic and equal to $x^{2}+y^{2}$ on $\partial D.$ It follows that the
only polygonal domain for which the polynomial $x^{2}+y^{2}$ has a
polynomial solution for the Dirichlet problem is an equilateral triangle. We
shall prove in this paper that no polygonal domain has property (KS). Thus
the equilateral triangle is an example of a domain where the polynomial $%
x^{2}+y^{2}$ has a polynomial solution for the Dirichlet problem but
property (KS) is not satisfied (for further examples see \cite{HaSh94} or 
\cite{LuRe10}).

Let us denote the set of all polynomials in $d$ variables with real
coefficients by $\mathbb{R}\left[ x\right] $. Assume that $\psi
_{1},...,\psi _{N}\in \mathbb{R}\left[ x\right] .$ We say that the boundary $%
\partial \Omega $ of a domain $\Omega $ in $\mathbb{R}^{d}$ contains $N$
different irreducible algebraic hypersurfaces defined by $\psi _{1},...,\psi
_{N}$ if (i) each polynomial $\psi _{k},k=1,...N$ is irreducible, (ii) $\psi
_{k}\neq \lambda \psi _{l}$ for all $k\neq l$, $k,l=1,...,N$ and for all
real numbers $\lambda $, and (iii) for each $k=1,...,N$ there exists an open
set $U_{k}$ such that 
\[
\left\{ x\in \mathbb{R}^{d}:\psi _{k}\left( x\right) =0\right\} \cap
U_{k}\subset \partial \Omega 
\]%
and $\psi _{k}$ changes its sign on $U_{k},$ i.e. there exist $x,y\in U_{k}$
such that $\psi _{k}\left( x\right) <0<\psi _{k}\left( y\right) .$ Obviously
a polygonal domain in the plane with $N$ edges satisfies this condition
where the polynomials $\psi _{k},k=1,...,N$ are linear polynomials.

Elementary results in Algebraic Geometry (cf. the proof of Theorem 27 in 
\cite{Rend08}) lead to the following observation:

\begin{theorem}
Assume that $\partial \Omega $ contains $N$ different irreducible algebraic
hypersurfaces defined by the polynomials $\psi _{1},...,\psi _{N}$. If $%
\Omega $ has property (KS) then the Fischer operator\footnote[2]{%
More generaly one can define a Fischer operator by $q\longmapsto P\left(
D\right) \left( \psi q\right) $ where $P\left( D\right) $ is a linear
partial differential operator with constant real coefficients. Fischer's
Theorem in \cite{Fischer} states that the Fischer operator is a bijection
whenever $\psi \left( x\right) $ is a homogeneous polynomial equal to the
polynomial $P\left( x\right) .$} $F_{\psi }:$ $\mathbb{R}\left[ x\right]
\rightarrow \mathbb{R}\left[ x\right] $ defined by 
\[
F_{\psi }\left( q\right) :=\Delta \left( \psi q\right) \hbox{ for }q\in 
\mathbb{R}\left[ x\right] 
\]%
is surjective where $\psi =$ $\psi _{1}\cdot ...\cdot \psi _{N}$.
\end{theorem}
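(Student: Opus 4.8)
The plan is to prove the contrapositive implicitly by using property (KS) to construct, for every polynomial data, a harmonic polynomial extension and then relate this to the factored polynomial $\psi = \psi_1 \cdots \psi_N$. The key observation is that if $\Omega$ has property (KS), then for any polynomial $p$ there is a harmonic polynomial $h$ with $h = p$ on $\partial\Omega$, so the difference $p - h$ vanishes on each hypersurface $\{\psi_k = 0\}$ near the boundary. I would first argue that $p - h$ must in fact be divisible by $\psi$ in the ring $\mathbb{R}[x]$: since each $\psi_k$ is irreducible and $p - h$ vanishes on an open portion $U_k$ of the zero set where $\psi_k$ changes sign, the real variety $\{\psi_k = 0\}$ has a regular point in $U_k$, so the real zero set is Zariski-dense in the complex hypersurface; the Nullstellensatz (or the fact that $\psi_k$ is irreducible and the zero set is not contained in a smaller variety) then forces $\psi_k \mid (p - h)$. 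Because the $\psi_k$ are pairwise non-proportional irreducibles, they are coprime, and so the product $\psi$ divides $p - h$.

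Granting this divisibility, I would write $p - h = \psi q$ for some $q \in \mathbb{R}[x]$, i.e. $p = h + \psi q$. Applying the Laplacian and using $\Delta h = 0$ gives $\Delta p = \Delta(\psi q) = F_\psi(q)$. Since $p$ was an arbitrary polynomial and $\Delta$ maps $\mathbb{R}[x]$ onto $\mathbb{R}[x]$ (every polynomial is the Laplacian of some polynomial — this is elementary, realized degree by degree), the range of $F_\psi$ contains all polynomials of the form $\Delta p$, which is all of $\mathbb{R}[x]$. Hence $F_\psi$ is surjective, as claimed.

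The step I expect to be the main obstacle is the rigorous justification that vanishing on the open piece $U_k$ of the real zero set forces algebraic divisibility by $\psi_k$. The subtlety is that a real polynomial can be irreducible over $\mathbb{R}$ yet have a lower-dimensional real zero set (e.g. $x_1^2 + x_2^2$), so one genuinely needs the sign-change hypothesis (iii): it guarantees that $\{\psi_k = 0\} \cap U_k$ is a real hypersurface of full dimension $d-1$ and contains smooth points, which is exactly what is needed to invoke the real Nullstellensatz or a direct analytic-continuation argument. I would handle this by selecting a regular point $\xi_0 \in U_k$ with $\nabla \psi_k(\xi_0) \neq 0$, using the implicit function theorem to parametrize the zero set locally, and concluding that any polynomial vanishing on this $(d-1)$-dimensional piece is divisible by $\psi_k$ because $\psi_k$ generates the (radical, hence prime) ideal of functions vanishing on its irreducible zero set. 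The coprimality of distinct $\psi_k$ and the passage from divisibility by each factor to divisibility by the product is then routine unique factorization in $\mathbb{R}[x]$.
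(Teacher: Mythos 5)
Your proposal is correct and follows essentially the same route that the paper points to (it gives no proof of this theorem itself, referring instead to the proof of Theorem 27 in \cite{Rend08}): use (KS) to write $p-h$ vanishing on the boundary pieces, invoke the sign-change hypothesis and irreducibility to get $\psi_k\mid(p-h)$ for each $k$, hence $\psi\mid(p-h)$ by coprimality, and combine with the surjectivity of $\Delta$ on $\mathbb{R}\left[ x\right]$ to conclude $F_{\psi}$ is onto. You also correctly identify the one genuinely delicate step, namely that the sign change in $U_k$ is what guarantees the real zero set of the irreducible $\psi_k$ is a full $(d-1)$-dimensional hypersurface so that vanishing on it forces algebraic divisibility.
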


The main result of this paper is the following:

\begin{theorem}
Suppose that the polynomial $\psi$ has at least three non-constant
polynomial factors and two of the factors have a common zero $x_{0}.$ Then $%
F_{\psi}$ is not surjective.
\end{theorem}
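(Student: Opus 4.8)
The plan is to study $F_{\psi}$ through the apolar (Fischer) pairing on $\mathbb{R}[x]$, defined on monomials by $\langle x^{\alpha},x^{\beta}\rangle=\alpha!\,\delta_{\alpha\beta}$. Under this pairing the adjoint of multiplication by a polynomial $\phi$ is the differential operator $\phi(D)$, and the adjoint of $\Delta$ is multiplication by $|x|^{2}$. Writing $F_{\psi}=\Delta\circ M_{\psi}$, its adjoint is therefore $F_{\psi}^{*}(\lambda)=\psi(D)(|x|^{2}\lambda)$. Since the pairing identifies the full algebraic dual of $\mathbb{R}[x]$ with the ring of formal power series $\mathbb{R}[[x]]$, the operator $F_{\psi}$ is surjective if and only if $F_{\psi}^{*}$ is injective on $\mathbb{R}[[x]]$. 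Consequently it suffices to produce a single nonzero $\lambda$ with $\psi(D)(|x|^{2}\lambda)=0$; in fact it is enough to find a nonzero polynomial $v$ lying in the ideal $(|x|^{2})$ with $\psi(D)v=0$, since then $\lambda=v/|x|^{2}$ is a nonzero polynomial in the kernel of $F_{\psi}^{*}$.

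First I would localize at the common zero. After a translation we may assume that the common zero of the two factors, say $\psi_{1}$ and $\psi_{2}$, is the origin. Writing $\psi=\psi_{1}\psi_{2}g$, where $g$ is the product of the remaining factors (non-constant because there are at least three), the product rule together with $\psi_{1}(0)=\psi_{2}(0)=0$ gives $\psi(0)=0$ and $\nabla\psi(0)=0$, so $\psi$ vanishes to order at least two at $0$ and its lowest homogeneous part is $\psi_{(2)}=c\,L_{1}L_{2}$, where $L_{i}(\xi)=\langle\nabla\psi_{i}(0),\xi\rangle$ and $c=g(0)$. In particular $\psi(D)$ has no terms of order below $2$, hence annihilates every polynomial of degree at most $1$ and maps $\mathcal{P}_{n}$ onto $\mathcal{P}_{n-2}$ (its leading operator $\psi_{(2)}(D)=c\,L_{1}(D)L_{2}(D)$ is already surjective); here $\mathcal{P}_{n}$ and $H_{n}$ denote the polynomials of degree at most $n$ and the homogeneous ones of degree $n$. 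Thus the space of polynomial solutions of $\psi(D)v=0$ inside $\mathcal{P}_{n}$ has dimension $\dim H_{n}+\dim H_{n-1}$, and the whole problem becomes showing that this kernel meets the ideal $(|x|^{2})$ nontrivially.

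To locate such a $v$ I would work from the lowest degree upward. The lowest-degree constraint is $L_{1}(D)L_{2}(D)v_{(k)}=0$ with $v_{(k)}\in|x|^{2}H_{k-2}$. Using that $\ker L_{1}(D)+\ker L_{2}(D)\subseteq\ker\bigl(L_{1}(D)L_{2}(D)\bigr)$, this amounts to writing a multiple of $|x|^{2}$ as $p_{1}+p_{2}$ with $p_{i}$ constant in the direction $\nabla\psi_{i}(0)$; over $\mathbb{C}$ one imposes the two conditions coming from the linear factors of $|x|^{2}$, and these become solvable once the degree is large enough. For instance, for the operator $\partial_{1}(\partial_{1}+\partial_{2})$ in two variables the polynomial $4x_{2}^{4}+(x_{1}-x_{2})^{4}$ is divisible by $x_{1}^{2}+x_{2}^{2}$ and lies in the kernel, furnishing a nonzero $v_{(k)}$. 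One then tries to correct $v_{(k)}$ by higher homogeneous pieces, again taken in $(|x|^{2})$, so as to cancel the contributions of the higher Taylor parts $\psi_{(3)},\psi_{(4)},\dots$ of $\psi$.

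The main obstacle is exactly this last step: the crude dimension count is borderline, for the kernel of $\psi(D)$ on $\mathcal{P}_{n}$ and the subspace $(|x|^{2})\cap\mathcal{P}_{n}$ have, respectively, dimension and codimension both equal to $\dim H_{n}+\dim H_{n-1}$, so the nonvanishing of the intersection cannot be read off from dimensions alone and must be forced by the special geometry. The leverage comes from the two hypotheses together: the common zero makes $\psi_{(2)}$ split as a product $L_{1}L_{2}$ of two linear forms, which is what creates kernel elements divisible by $|x|^{2}$, while the third non-constant factor guarantees $\deg\psi\ge 3$ and supplies the higher-order terms needed for the correction recursion to close without obstruction. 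I expect the cleanest route is to show directly that the composite $\psi_{(2)}(D)\circ M_{|x|^{2}}$ fails to be injective on homogeneous polynomials of some degree and that this failure persists through the filtration defined by $\psi$. The degenerate case in which a further factor also vanishes at the common zero, so that $\mathrm{ord}_{0}\psi\ge 3$, is easier and can be treated first, since there a low-degree multiple of $|x|^{2}$ already lies in $\ker\psi(D)$.
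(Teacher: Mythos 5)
Your dual setup is sound as far as it goes: under the apolar pairing $F_{\psi}=\Delta\circ M_{\psi}$ does have adjoint $\lambda\mapsto\psi(D)(|x|^{2}\lambda)$ on $\mathbb{R}[[x]]$, and exhibiting a nonzero polynomial $v\in(|x|^{2})$ with $\psi(D)v=0$ would indeed disprove surjectivity; your base-case example for $\partial_{1}(\partial_{1}+\partial_{2})$ checks out. But the argument stops exactly where the theorem lives. As you yourself observe, $\ker\psi(D)\cap\mathcal{P}_{n}$ and $(|x|^{2})\cap\mathcal{P}_{n}$ have complementary dimension and codimension, so the intersection is zero for a ``generic'' operator of order $2$; nothing you write forces it to be nonzero for the actual $\psi$. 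The correction recursion you sketch is genuinely obstructed: at each degree you must solve $\psi_{(2)}(D)\bigl(|x|^{2}h_{k}\bigr)=r_{k}$ with a prescribed $r_{k}\in H_{k}$, and the map $h\mapsto\psi_{(2)}(D)(|x|^{2}h)$ on $H_{k}$ is a square matrix that fails to be surjective precisely because it fails to be injective --- which is the very feature you exploited to start the recursion. You also need, and do not have, the non-degenerate base case when the two factors are tangent at $x_{0}$ (e.g.\ $\psi_{1}=x_{1}$, $\psi_{2}=x_{1}-x_{2}^{2}$): then $\psi_{(2)}=cL_{1}^{2}$ and no nonzero multiple of $|x|^{2}$ is killed by $L_{1}(D)^{2}$, so the lowest-order step produces nothing. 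Finally, a nonzero element of $\ker F_{\psi}^{*}$ need not be a polynomial, so even in principle your polynomial ansatz may be too restrictive. The phrases ``I expect'' and ``must be forced by the special geometry'' mark a missing idea, not a deferred computation.

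For contrast, the paper closes this gap by working on the primal side with an induction on degree rather than by producing an explicit kernel element. After translating $x_{0}$ to the origin, the common zero of two factors makes $0$ a zero of $\psi$ of multiplicity exactly $2$ (multiplicity $\geq 3$ is excluded because then the constant $1$ could not be $\Delta(\psi q)$). The key step (Theorem \ref{theoremMain}) shows by induction on $N$ that surjectivity of $F_{\psi}$ on $\mathbb{P}^{\leq N}$ forces $F_{\psi_{2}}$, the Fischer operator of the initial quadratic part, to be \emph{bijective}: comparing lowest homogeneous terms in $\Delta(\psi q)=f_{N}$ pins down the initial part of $q$ and shows $f_{N}=\Delta(\psi_{2}q_{N})$. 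On the other hand, the factorization hypothesis makes the double-order factor $\psi^{(1)}$ a harmonic divisor (Proposition \ref{propositionHD}), hence its initial part $\psi_{2}$ is a harmonic divisor, i.e.\ $F_{\psi_{2}}$ has nontrivial kernel --- contradicting bijectivity. This is where the third factor and the common zero each earn their keep, and it is the structural input your dimension-count approach still needs. If you want to salvage the dual route, you would have to prove an analogue of Theorem \ref{theoremMain} in dual form, at which point you have essentially reproduced the paper's argument.
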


As an application we see that no bounded domain $\Omega $ in $\mathbb{R}^{2}$
with polygonal boundary has property (KS): there are at least $3$ edges
defined by linear polynomials $\psi _{1},\psi _{2},\psi _{3}.$ Since $\Omega 
$ is bounded at least two of them have a common zero and the result follows.
In particular, the equilateral triangle does not satisfy property (KS).

Theorem 1 shows that (KS) is related to the following purely algebraic
question: for which polynomials $\psi \in \mathbb{R}\left[ x\right] $ is the
Fischer operator $F_{\psi }:$ $\mathbb{R}\left[ x\right] \rightarrow \mathbb{%
R}\left[ x\right] $ defined by 
\[
F_{\psi }\left( q\right) :=\Delta \left( \psi q\right) \hbox{ for }q\in 
\mathbb{R}\left[ x\right] 
\]%
surjective? In \cite{ChSi01} M. Chamberland and D. Siegel formulated the
following conjecture:

\begin{itemize}
\item[(CS)] The surjectivity of the Fischer operator $F_{\psi}:$ $\mathbb{R}%
\left[ x\right] \rightarrow\mathbb{R}\left[ x\right] $ implies that the
degree of $\psi$ is $\leq2.$
\end{itemize}

In the last section we will summarize a recent result of the author showing
that (CS) implies the Khavinson-Shapiro conjecture for so-called admissible
domains. We refer the interested reader to \cite{LuRe10} for more results on
the conjecture (CS) and related statements.

\section{Surjectivity of Fischer operators.}

The degree of a polynomial $\psi $ is denoted by $\deg \psi .$ Recall that
the multiplicity $N$ of a zero $x_{0}$ of a polynomial $\psi \left( x\right) 
$ is the largest natural number $N$ such that 
\[
\frac{\partial ^{\alpha }}{\partial x^{\alpha }}\psi \left( x_{0}\right) =0%
\hbox{ for all }\alpha =\left( \alpha _{1},...,\alpha _{d}\right) \in 
\mathbb{N}_{0}^{d}\hbox{ with }\left\vert \alpha \right\vert =\alpha
_{1}+...+\alpha _{d}\leq N-1. 
\]

A polynomial $f_{j}$ is called \emph{homogeneous} of degree $j$ if $%
f_{j}(rx)=r^{j}f_{j}(x)$ for all $r>0$ and for all $x\in \mathbb{R}^{d}$. A
polynomial $f$ of degree $m$ can be expanded into a finite sum of
homogeneous polynomials $f_{j}$ of degree $j$ such that 
\[
f=f_{t}+...+f_{m}\hbox{ and }t\leq m 
\]%
where $f_{m}\neq 0$ is called the \emph{principal part} or \emph{leading part%
}$,$ and $f_{t}\neq 0$ is called the \emph{initial part} of the polynomial $%
f.$

The following definition arises in the investigation of stationary sets for
the wave and heat equation, see \cite{AgKr00},\cite{AgQu01}, and the
injectivity of the spherical Radon transform, see \cite{AVZ99}, and the
investigation of level curves of harmonic functions, see \cite[p. 425]{FNS66}%
.

\begin{definition}
A polynomial $f$ is called a harmonic divisor if there exists a non-zero
polynomial $q$ such that $fq$ is harmonic.
\end{definition}

Thus a polynomial is harmonic divisor if and only if is a factor of a
harmonic polynomial, in particular every harmonic polynomial is a harmonic
divisor (by taking $q$ as the constant function).

By Theorem 2.2.3 in \cite{ArGa01} any harmonic divisor takes both positive
and negative values, thus a harmonic divisor does not have any non-negative
factor, a result which is due to M. Brelot and G. Choquet in \cite{B-C}.

The following result is well known:

\begin{proposition}
Let $f$ be a polynomial with homogeneous expansion $f=f_{t}+...+f_{m},$
where $f_{t}\neq0$ and $f_{m}\neq0.$ If $f$ is a harmonic divisor then $%
f_{t} $ and $f_{m}$ are harmonic divisors.
\end{proposition}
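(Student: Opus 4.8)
The plan is to exploit the fact that the Laplacian is a homogeneous operator of degree $-2$: it sends a homogeneous polynomial of degree $k$ to a homogeneous polynomial of degree $k-2$ (possibly zero). Together with the fact that $\mathbb{R}\left[ x\right] $ is an integral domain, this will force the extreme homogeneous parts of any harmonic product to be harmonic on their own, which is exactly what we need.

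Concretely, since $f$ is a harmonic divisor, I would fix a non-zero polynomial $q$ with $\Delta \left( fq\right) =0$ and write its homogeneous expansion as $q=q_{s}+...+q_{n}$, where $q_{s}\neq 0$ is the initial part and $q_{n}\neq 0$ the leading part. Multiplying out, the product $fq$ has a homogeneous expansion whose degrees range from $t+s$ to $m+n$. Its leading part is $f_{m}q_{n}$ and its initial part is $f_{t}q_{s}$, and both are non-zero precisely because $\mathbb{R}\left[ x\right] $ has no zero divisors while $f_{m},f_{t},q_{s},q_{n}$ are all non-zero.

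The next step is to decompose the single equation $\Delta \left( fq\right) =0$ according to homogeneous degree. Since $\Delta $ maps the degree-$j$ homogeneous part of $fq$ into degree $j-2$, the images of distinct homogeneous parts sit in distinct degrees and cannot cancel one another. Hence $\Delta \left( fq\right) =0$ is equivalent to the vanishing of $\Delta $ on each homogeneous component of $fq$ separately, and reading off the top and bottom components yields $\Delta \left( f_{m}q_{n}\right) =0$ and $\Delta \left( f_{t}q_{s}\right) =0$.

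Finally, because $q_{n}\neq 0$ and $f_{m}q_{n}$ is harmonic, $f_{m}$ is a harmonic divisor by definition; likewise $q_{s}\neq 0$ together with $f_{t}q_{s}$ harmonic shows that $f_{t}$ is a harmonic divisor, which completes the argument. I do not expect any substantial obstacle here: the whole proof rests on two elementary observations, namely the identification of the extreme homogeneous parts of $fq$ as $f_{m}q_{n}$ and $f_{t}q_{s}$ (using that the polynomial ring is an integral domain), and the fact that $\Delta $ preserves homogeneity up to the fixed degree shift $-2$, so that one harmonicity equation splits into independent equations indexed by degree. The only point deserving care is this bookkeeping of degrees.
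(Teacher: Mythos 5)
Your proof is correct: the identification of the extreme homogeneous components of $fq$ as $f_{t}q_{s}$ and $f_{m}q_{n}$ (non-zero since $\mathbb{R}\left[ x\right]$ is an integral domain) and the degree-splitting of the single equation $\Delta\left( fq\right) =0$ are exactly the right ingredients. The paper states this proposition without proof, labelling it ``well known,'' and your argument is the standard one, matching in spirit the degree-bookkeeping used elsewhere in the paper (e.g.\ in Proposition \ref{propharDiv} and Theorem \ref{theoremMain}).
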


The next two results provide some simple necessary conditions of the
surjectivity of the Fischer operator:

\begin{proposition}
\label{propharDiv}Let $\psi$ be a polynomial of degree $\geq3$ with
homogeneous expansion 
\[
\psi=\psi_{t}+...+\psi_{m}, 
\]
where $\psi_{t}\neq0$ and $\psi_{m}\neq0.$ If the constant function is in
the image of the Fischer operator $F_{\psi}$ then $\psi_{m}$ is a harmonic
divisor.
\end{proposition}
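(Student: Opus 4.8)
The plan is to argue by a direct degree analysis of the principal parts. Suppose the constant function lies in the image of $F_{\psi}$, so that there is a polynomial $q$ with $\Delta(\psi q) = c$ for a nonzero constant $c$; then necessarily $q \neq 0$. Write the homogeneous expansion $q = q_{s} + \cdots + q_{n}$ with principal part $q_{n} \neq 0$ of degree $n$.

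First I would identify the principal part of the product $\psi q$. Since $\psi_{m} \neq 0$ and $q_{n} \neq 0$ and the polynomial ring $\mathbb{R}[x]$ is an integral domain, the product $\psi_{m} q_{n}$ is a nonzero homogeneous polynomial of degree $m + n$, and it is precisely the principal part of $\psi q$. Next I would track the effect of $\Delta$ on the top degree: the Laplacian sends a homogeneous polynomial of degree $k$ to a homogeneous polynomial of degree $k-2$ (or to zero) and acts degree-by-degree, so the homogeneous component of $\Delta(\psi q)$ of top possible degree $m+n-2$ equals $\Delta(\psi_{m} q_{n})$.

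The crux is the comparison of degrees. By hypothesis $\deg \psi = m \geq 3$, and $n \geq 0$, so $m + n - 2 \geq 1$. On the other hand $\Delta(\psi q) = c$ is constant, hence has no homogeneous component of positive degree. Comparing the component of degree $m+n-2 \geq 1$ on both sides forces $\Delta(\psi_{m} q_{n}) = 0$. Thus $\psi_{m} q_{n}$ is harmonic with $q_{n} \neq 0$, which is exactly the assertion that $\psi_{m}$ is a harmonic divisor.

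The only point requiring care is that the principal part of $\psi q$ really is $\psi_{m} q_{n}$ and does not cancel; this is guaranteed because $\mathbb{R}[x]$ has no zero divisors, so the argument is essentially free of obstacles. It is worth noting where the hypothesis enters: the assumption $\deg \psi \geq 3$ is exactly what makes the gap $m+n-2$ strictly positive, so that the vanishing of the top Laplacian component can be read off. For $m = 2$ the argument breaks down, consistent with the fact that $\psi_{m}$ may then be a nonnegative form, which can never be a harmonic divisor by the Brelot--Choquet result cited above.
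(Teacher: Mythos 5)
Your argument is correct and is essentially the paper's own proof: both isolate the top homogeneous component $\psi_m q_M$ of $\psi q$, note that its degree is at least $3$ because $\deg\psi\geq 3$, and conclude from $\Delta(\psi q)$ being constant that $\Delta(\psi_m q_M)=0$, so $\psi_m$ divides the nonzero harmonic polynomial $\psi_m q_M$. Your write-up is in fact slightly more careful than the paper's (which asserts the image ``has degree $1$'' rather than degree at least $1$), but the route is the same.
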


\begin{proof}
For the constant function $1$ there exists a polynomial $q$ such that $%
1=\Delta\left( \psi q\right) .$ Let us write $q=q_{T}+....+q_{M}$ with $%
q_{T}\neq0$ and $q_{M}\neq0.$ Then the polynomial $\psi q$ has the
homogeneous expansion 
\[
\psi q=\psi_{t}q_{T}+\left( \psi_{t+1}q_{T}+\psi_{t}q_{T+1}\right)
+\cdots+\psi_{m}q_{M}. 
\]
By assumption $1=\Delta\left( \psi q\right) ,$ i.e. the homogeneous
expansion of the polynomial $\Delta\left( \psi q\right) $ consists just of
the constant function $1.$ On the other hand, $\psi_{m}q_{M}$ has degree at
least $3$, so $\Delta\left( \psi_{m}q_{M}\right) $ has degree $1,$and
therefore $\Delta\left( \psi_{m}q_{M}\right) =0.$
\end{proof}

\begin{proposition}
\label{propositionOrder2}Let $\psi$ be a polynomial. If the constant
function is in the image of the Fischer operator $F_{\psi}$ then any zero $%
x\in\mathbb{R}^{d}$ of $\psi$ has multiplicity $\leq2$.
\end{proposition}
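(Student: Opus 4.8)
The plan is to reduce to the case $x_{0}=0$ and then argue by a short degree count on the lowest-order terms of $\psi q$. First I would observe that both the Laplacian and the notion of multiplicity of a zero are invariant under translation: replacing $\psi \left( x\right) $ by $\widetilde{\psi }\left( x\right) =\psi \left( x+x_{0}\right) $ and $q\left( x\right) $ by $\widetilde{q}\left( x\right) =q\left( x+x_{0}\right) $ turns the identity $\Delta \left( \psi q\right) =1$ into $\Delta \left( \widetilde{\psi }\widetilde{q}\right) =1$, while the multiplicity of the zero $x_{0}$ of $\psi $ equals the multiplicity of the zero $0$ of $\widetilde{\psi }$. Hence it suffices to treat a zero located at the origin.

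So I would assume that $0$ is a zero of $\psi $ of multiplicity $N$. By the definition of multiplicity this means that the homogeneous expansion of $\psi $ has initial part of degree $N$, i.e. $\psi =\psi _{N}+\psi _{N+1}+\cdots $ with $\psi _{N}\neq 0$ and $N\geq 1$. Since the constant function lies in the image of $F_{\psi }$, I would choose $q$ with $\Delta \left( \psi q\right) =1$; as $\Delta \left( \psi q\right) \neq 0$ the polynomial $q$ is nonzero, so it has an initial part $q_{T}$ of some degree $T\geq 0$ and no homogeneous component of degree $<T$. The elementary fact I would use is that every monomial appearing in the product $\psi q$ has degree at least $N+T$, since $\psi $ has no term of degree $<N$ and $q$ has no term of degree $<T$.

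Now I would argue by contradiction. Suppose $N\geq 3$. Then $N+T\geq 3$, so $\psi q$ has no homogeneous part of degree $\leq 2$; in particular its quadratic part vanishes. On the other hand, since $\Delta $ maps a homogeneous polynomial of degree $k$ to a homogeneous polynomial of degree $k-2$, the constant (degree-zero) component of $\Delta \left( \psi q\right) $ is precisely $\Delta $ applied to the quadratic part of $\psi q$, hence equal to $0$. This contradicts $\Delta \left( \psi q\right) =1$, whose constant component is $1$. Therefore $N\leq 2$, which is the assertion.

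I expect the only delicate point to be the bookkeeping of homogeneous components, specifically the claim that the degree-zero part of $\Delta \left( \psi q\right) $ is governed solely by the quadratic part of $\psi q$; but this is immediate once one writes out the homogeneous expansion and uses that $\Delta $ lowers the degree of a homogeneous polynomial by exactly $2$. No deeper machinery seems necessary: the argument is essentially the same low-degree truncation used in the proof of Proposition \ref{propharDiv}, applied at the bottom of the homogeneous expansion rather than at the top.
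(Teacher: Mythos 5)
Your argument is correct and rests on the same idea as the paper's proof: if $\psi$ vanishes to order $\geq 3$ at the point, then so does $\psi q$, forcing $\Delta(\psi q)$ to vanish there and contradicting $\Delta(\psi q)=1$. The paper reaches this slightly more directly --- it simply notes that all derivatives of $\psi$ of order $\leq 2$ vanish at $x_{0}$, so by the Leibniz rule $\frac{\partial^{2}}{\partial x_{j}^{2}}(\psi q)(x_{0})=0$ for each $j$ and hence $\Delta(\psi q)(x_{0})=0$, without translating to the origin or expanding into homogeneous parts --- but your bookkeeping of initial parts is sound and yields the same conclusion.
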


\begin{proof}
Let $q$ be a polynomial such that $1=F_{\psi}\left( q\right) =\Delta\left(
\psi q\right) .$ If the multiplicity of $\psi$ at $x_{0}$ is larger than $2$
then all derivatives of $\psi$ of order $\leq2$ at $x_{0}$ are zero and
therefore 
\[
\frac{\partial^{2}}{\partial x_{j}^{2}}\left( \psi q\right) \left(
x_{0}\right) =0 
\]
for $j=1,...,d.$ Thus $\Delta\left( \psi q\right) \left( x_{0}\right) =0,$ a
contradiction.
\end{proof}

From \cite{LuRe10} we cite the following result which is known as the \emph{%
Fischer decomposition} of a polynomial.

\begin{proposition}
\label{connect} Suppose $\psi$ is a polynomial. Then the operator $F_{\psi
}\left( q\right) :=\Delta\left( \psi q\right) $ is surjective if and only if
every polynomial $f$ can be decomposed as $f=\psi q_{f}+h_{f}$, where $q_{f}$
is a polynomial and $h_{f}$ is harmonic polynomial
\end{proposition}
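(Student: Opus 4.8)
The plan is to recognize this as essentially an application of the classical Fischer decomposition combined with one elementary observation: the Laplacian $\Delta:\mathbb{R}\left[ x\right] \rightarrow \mathbb{R}\left[ x\right] $ is itself surjective on polynomials. This surjectivity is classical and follows from the graded decomposition $\mathcal{P}_{n}=\mathcal{H}_{n}\oplus \left\vert x\right\vert ^{2}\mathcal{P}_{n-2}$ of the homogeneous polynomials of degree $n$ into the harmonic polynomials $\mathcal{H}_{n}$ and a multiple of $\mathcal{P}_{n-2}$, which exhibits $\Delta :\mathcal{P}_{n}\rightarrow \mathcal{P}_{n-2}$ as onto for every $n$; summing over degrees gives surjectivity of $\Delta $ on all of $\mathbb{R}\left[ x\right] $. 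I would record this as the one external ingredient and then prove both implications by simply applying $\Delta $ to the relevant identity.

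For the forward direction, assume $F_{\psi }$ is surjective and let $f$ be an arbitrary polynomial. Since $\Delta f$ is again a polynomial and $F_{\psi }$ is onto, there is a polynomial $q_{f}$ with $\Delta \left( \psi q_{f}\right) =F_{\psi }\left( q_{f}\right) =\Delta f$. Setting $h_{f}:=f-\psi q_{f}$ then gives $\Delta h_{f}=\Delta f-\Delta \left( \psi q_{f}\right) =0$, so $h_{f}$ is harmonic and $f=\psi q_{f}+h_{f}$ is the desired decomposition. This direction uses only the definition of surjectivity and the linearity of $\Delta $.

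For the converse, assume every polynomial admits such a decomposition and let $g$ be an arbitrary polynomial; the goal is to exhibit $q$ with $F_{\psi }\left( q\right) =g$. Using surjectivity of $\Delta $ on $\mathbb{R}\left[ x\right] $, I would choose a polynomial $f$ with $\Delta f=g$. By hypothesis write $f=\psi q+h$ with $h$ harmonic. Applying $\Delta $ and using $\Delta h=0$ yields $g=\Delta f=\Delta \left( \psi q\right) =F_{\psi }\left( q\right) $, so $g$ lies in the image of $F_{\psi }$. As $g$ was arbitrary, $F_{\psi }$ is surjective.

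The two implications are short and essentially symmetric once one differentiates the right quantity. The only genuine obstacle — and the step I would be most careful to justify — is the surjectivity of $\Delta $ on polynomials, which is indispensable for the converse: without it one could not guarantee that a prescribed target $g$ arises as $\Delta f$ for some $f$ to which the decomposition hypothesis can be applied. Everything else is formal manipulation with the Laplacian.
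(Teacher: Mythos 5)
Your proof is correct; the paper itself gives no argument for this proposition (it is quoted from \cite{LuRe10}), and your two-directional argument --- reducing each implication to an application of $\Delta$ together with the classical surjectivity of $\Delta$ on $\mathbb{R}\left[ x\right]$ --- is exactly the standard proof given there. The one ingredient you flag, surjectivity of $\Delta$, is correctly justified by the graded decomposition $\mathcal{P}_{n}=\mathcal{H}_{n}\oplus \left\vert x\right\vert ^{2}\mathcal{P}_{n-2}$ plus rank--nullity, so nothing is missing.
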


In the following we always assume that the polynomial $\psi $ in the
definition of the Fischer operator $F_{\psi }$ has at least two factors:

\begin{proposition}
\label{propositionHD}Suppose that the polynomial $\psi$ has at least two
non-constant polynomial factors, say $\psi=\psi^{\left( 1\right)
}\psi^{\left( 2\right) }.$ If $F_{\psi}$ is surjective then $\psi^{\left(
1\right) }$ and $\psi^{\left( 2\right) }$ are harmonic divisors.
\end{proposition}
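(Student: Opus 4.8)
The plan is to exploit the Fischer decomposition characterization of surjectivity provided by Proposition \ref{connect}. Since $F_{\psi}$ is surjective, every polynomial $f$ admits a representation $f=\psi q_{f}+h_{f}$ with $q_{f}$ a polynomial and $h_{f}$ harmonic. The guiding idea is to feed a cleverly chosen $f$ into this decomposition so that one of the factors $\psi^{\left(1\right)},\psi^{\left(2\right)}$ is exposed as a divisor of the resulting harmonic polynomial.

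First I would apply the decomposition to $f=\psi^{\left(2\right)}$. This yields $\psi^{\left(2\right)}=\psi q+h=\psi^{\left(1\right)}\psi^{\left(2\right)}q+h$ for some polynomial $q$ and some harmonic polynomial $h$. Rearranging gives $h=\psi^{\left(2\right)}\left(1-\psi^{\left(1\right)}q\right)$, so that $h$ is divisible by $\psi^{\left(2\right)}$. To conclude that $\psi^{\left(2\right)}$ is a harmonic divisor it then remains only to verify that the cofactor $Q:=1-\psi^{\left(1\right)}q$ is a nonzero polynomial; for in that case $\psi^{\left(2\right)}Q=h$ is harmonic with $Q\neq0$, which is precisely the defining property of a harmonic divisor.

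The single point requiring attention — and the only potential obstacle — is checking that $Q\neq0$. This is exactly where the hypothesis that the factors are non-constant is used: if $Q=0$ then $\psi^{\left(1\right)}q=1$, which would force $\psi^{\left(1\right)}$ to be a nonzero constant, contradicting $\deg\psi^{\left(1\right)}\geq1$. Hence $Q\neq0$, and since $\mathbb{R}\left[x\right]$ is an integral domain and $\psi^{\left(2\right)}\neq0$, the harmonic polynomial $h=\psi^{\left(2\right)}Q$ is itself nonzero.

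Finally, running the identical argument with the roles of $\psi^{\left(1\right)}$ and $\psi^{\left(2\right)}$ interchanged, that is, applying the decomposition to $f=\psi^{\left(1\right)}$, produces $h'=\psi^{\left(1\right)}\left(1-\psi^{\left(2\right)}q'\right)$ and shows in the same way that $\psi^{\left(1\right)}$ is a harmonic divisor. This establishes the claim for both factors.
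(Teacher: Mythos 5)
Your proposal is correct and follows essentially the same route as the paper: apply the Fischer decomposition of Proposition \ref{connect} to the factor itself, rearrange to exhibit that factor as a divisor of the resulting harmonic polynomial, and check the cofactor is nonzero (the paper argues $h\neq 0$ first via degrees, you argue $Q\neq 0$ first via invertibility --- both are fine and equivalent). No gaps.
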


\begin{proof}
We can write $\psi^{\left( 1\right) }=\psi^{\left( 1\right) }\psi^{\left(
2\right) }q+h$ for some polynomial $q$ and a harmonic polynomial $q.$ Then $%
h\neq0$ since otherwise $\psi^{\left( 1\right) }=\psi^{\left( 1\right)
}\psi^{\left( 2\right) }q$ which is impossible since $\psi^{\left( 2\right)
} $ has degree $\geq1$. It follows that for $g:=1-\psi^{\left( 2\right) }q$ 
\[
\psi^{\left( 1\right) }g=h\neq0. 
\]
Then $g\neq0$ and $\psi^{\left( 1\right) }$ is a harmonic divisor. The case
of $\psi^{\left( 2\right) }$ follows by symmetry of the problem.
\end{proof}

\begin{lemma}
\label{LemTrans}Let $x_{0}\in\mathbb{R}^{d}.$ Then the Fischer operator $%
F_{\psi}:\mathbb{R}\left[ x\right] \rightarrow\mathbb{R}\left[ x\right] $ is
surjective if and only if $F_{\varphi}:\mathbb{R}\left[ x\right] \rightarrow%
\mathbb{R}\left[ x\right] $ is surjective for the polynomial $\varphi\left(
x\right) =\psi\left( x-x_{0}\right) $.
\end{lemma}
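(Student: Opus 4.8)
The plan is to reduce the statement to the translation invariance of the Laplacian together with the observation that a translation is a linear automorphism of $\mathbb{R}\left[ x\right] $. Introduce the translation operator $T:\mathbb{R}\left[ x\right] \rightarrow \mathbb{R}\left[ x\right] $ defined by $\left( Tf\right) \left( x\right) :=f\left( x-x_{0}\right) $. This is a linear bijection with inverse $\left( T^{-1}f\right) \left( x\right) =f\left( x+x_{0}\right) $, and by construction it maps $\psi $ to $\varphi $, i.e. $\varphi =T\psi $.

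The key step is to establish the conjugation identity
\[
F_{\varphi }=T\circ F_{\psi }\circ T^{-1}.
\]
To verify this, take any $q\in \mathbb{R}\left[ x\right] $ and set $\widetilde{q}:=T^{-1}q$. Two elementary facts are needed. First, translation commutes with multiplication: since $\left( Tf\right) \left( x\right) \left( Tg\right) \left( x\right) =f\left( x-x_{0}\right) g\left( x-x_{0}\right) =\left( fg\right) \left( x-x_{0}\right) $, we obtain $\varphi q=\left( T\psi \right) \left( T\widetilde{q}\right) =T\left( \psi \widetilde{q}\right) $. Second, the Laplacian commutes with translation, $\Delta \left( Tf\right) =T\left( \Delta f\right) $, because each operator $\partial ^{2}/\partial x_{j}^{2}$ is itself translation invariant (the chain rule introduces no extra factor, the derivative of $x_{j}-\left( x_{0}\right) _{j}$ being $1$). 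Combining the two,
\[
F_{\varphi }\left( q\right) =\Delta \left( \varphi q\right) =\Delta \left( T\left( \psi \widetilde{q}\right) \right) =T\left( \Delta \left( \psi \widetilde{q}\right) \right) =T\left( F_{\psi }\left( T^{-1}q\right) \right) ,
\]
which is precisely the claimed identity.

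Once the identity is in hand, the conclusion follows formally, since conjugation by the bijection $T$ preserves surjectivity. The image of $F_{\varphi }$ equals $T$ applied to the image of $F_{\psi }$, because $T^{-1}$ maps $\mathbb{R}\left[ x\right] $ onto itself; and as $T$ is a bijection of $\mathbb{R}\left[ x\right] $ onto itself, the image of $F_{\varphi }$ is all of $\mathbb{R}\left[ x\right] $ if and only if the image of $F_{\psi }$ is all of $\mathbb{R}\left[ x\right] $. This yields both implications at once. I do not anticipate a genuine obstacle here: the lemma is essentially a bookkeeping statement, and the only point requiring care is the pair of commutation relations above, in particular the fact that $\Delta $ commutes with $T$, which is immediate because differentiation is insensitive to a constant shift of the argument.
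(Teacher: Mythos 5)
Your proof is correct and rests on the same idea as the paper's: translation is a linear bijection of $\mathbb{R}\left[ x\right]$ that commutes with the Laplacian, so one translates the data, applies surjectivity, and translates back. The only cosmetic difference is that you package this as the conjugation identity $F_{\varphi}=T\circ F_{\psi}\circ T^{-1}$ and invoke preservation of surjectivity under conjugation, whereas the paper argues element by element, producing a Fischer decomposition $f(x)=\varphi(x)q(x-x_{0})+h(x-x_{0})$ in the spirit of Proposition \ref{connect}; both routes are sound.
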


\begin{proof}
Let $f\left( x\right) $ be polynomial and define $g\left( x\right) =f\left(
x+x_{0}\right) .$ If $F_{\psi}$ is surjective then there exist a polynomial $%
q$ and a harmonic polynomial $h$ such that $g\left( x\right) =\psi\left(
x\right) q\left( x\right) +h\left( x\right) $ for all $x\in\mathbb{R}^{d}.$
Replace $x$ by $x-x_{0},$ then 
\[
f\left( x\right) =\psi\left( x-x_{0}\right) q\left( x-x_{0}\right) +h\left(
x-x_{0}\right) . 
\]
Clearly $x\longmapsto h\left( x-x_{0}\right) $ is harmonic. It follows that $%
F_{\varphi}$ is surjective. The converse follows similarly.
\end{proof}

If $F_{\psi}$ is surjective and if $\psi$ has a zero of multiplicity $2$ at
some $x_{0}$ then Lemma \ref{LemTrans} shows that we may assume that $%
x_{0}=0.$ In this case the following observation is crucial:

\begin{theorem}
\label{theoremMain}Suppose that $\psi$ has a zero of multiplicity $2$ at $0$
and let 
\[
\psi\left( x\right) =\psi_{2}\left( x\right) +...+\psi_{t}\left( x\right) 
\]
be the expansion into a sum of homogeneous polynomials with $\psi_{2}\neq0$
and $\psi_{t}\neq0.$ If the Fischer operator $F_{\psi}:\mathbb{R}\left[ x%
\right] \rightarrow\mathbb{R}\left[ x\right] $ is surjective then the
Fischer operator $F_{\psi_{2}}:\mathbb{R}\left[ x\right] \rightarrow \mathbb{%
R}\left[ x\right] $ with respect to the polynomial $\psi_{2}$ of degree $2$
is bijective. Moreover the Fischer operator $F_{\psi}:\mathbb{R}\left[ x%
\right] \rightarrow\mathbb{R}\left[ x\right] $ is bijective.
\end{theorem}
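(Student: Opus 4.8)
The plan is to treat the two assertions separately, reducing the bijectivity of $F_\psi$ to the bijectivity of $F_{\psi_2}$ and then isolating the latter as the real core of the theorem. For the reduction I would argue on initial parts, transferring injectivity downward: if $\Delta(\psi q)=0$ for some $q\neq 0$ with lowest homogeneous part $q_T\neq 0$ of degree $T$, then the lowest homogeneous part of $\psi q$ is $\psi_2 q_T$ (nonzero, since $\mathbb{R}[x]$ is an integral domain), so the degree-$T$ component of $\Delta(\psi q)$ equals $\Delta(\psi_2 q_T)=F_{\psi_2}(q_T)$. Once $F_{\psi_2}$ is known to be injective this forces $q_T=0$, a contradiction; hence $F_\psi$ is injective, and together with the hypothesis of surjectivity, bijective. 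Thus the whole theorem follows as soon as $F_{\psi_2}$ is shown to be bijective.

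Since $\psi_2$ is homogeneous of degree $2$, the operator $F_{\psi_2}$ is graded: it maps the space $\mathcal{P}_s$ of homogeneous polynomials of degree $s$ into itself. As each $\mathcal{P}_s$ is finite-dimensional, bijectivity of $F_{\psi_2}$ is equivalent to injectivity on every $\mathcal{P}_s$ (equivalently, to $\psi_2$ not being a harmonic divisor). To exploit the surjectivity of $F_\psi$ I would pass to adjoints with respect to the Fischer inner product $\langle x^\alpha,x^\beta\rangle=\alpha!\,\delta_{\alpha\beta}$, for which multiplication by $x_j$ is adjoint to $\partial_j$. Writing $M_\psi$ for multiplication by $\psi$ and $\psi(D)$ for the constant-coefficient operator with symbol $\psi$, one has $M_\psi^{*}=\psi(D)$ and $\Delta^{*}=M_{|x|^2}$, whence $F_\psi^{*}=\psi(D)\,M_{|x|^2}$ and likewise $F_{\psi_2}^{*}=\psi_2(D)\,M_{|x|^2}$. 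Because the Fischer product is positive definite, surjectivity of $F_\psi$ forces $F_\psi^{*}$ to be injective: any $q$ with $F_\psi^{*}q=0$ is orthogonal to the image of $F_\psi$, hence to all of $\mathbb{R}[x]$, hence zero. Moreover $F_{\psi_2}^{*}$ is again graded, and on the finite-dimensional $\mathcal{P}_s$ the endomorphisms $F_{\psi_2}$ and $F_{\psi_2}^{*}$ have equal nullity, so it suffices to prove that $F_{\psi_2}^{*}$ is injective.

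The heart of the argument is a descent construction. I would argue by contradiction: suppose $F_{\psi_2}^{*}$ fails to be injective and let $s$ be the least degree for which there is a nonzero homogeneous $q_0\in\mathcal{P}_s$ with $\psi_2(D)(|x|^2 q_0)=0$. The key structural remark is that $\psi_2(D)$ is precisely the top-degree-preserving part of $\psi(D)=\sum_{j=2}^{t}\psi_j(D)$, since $\psi_j(D)$ lowers homogeneous degree by $j$. I would then seek $q=q_0+q^{(s-1)}+\dots+q^{(0)}$ with $q^{(n)}\in\mathcal{P}_n$ solving $F_\psi^{*}q=0$ degree by degree from the top down. The component of $F_\psi^{*}q$ in degree $s$ is $\psi_2(D)(|x|^2 q_0)$, which vanishes by the choice of $q_0$; the component in degree $n<s$ takes the triangular form
\[
F_{\psi_2}^{*}\bigl(q^{(n)}\bigr)=-\sum_{j\ge 3}\psi_j(D)\bigl(|x|^2 q^{(n+j-2)}\bigr),
\]
whose right-hand side depends only on the already-determined parts $q^{(n+1)},\dots,q^{(s)}$. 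By the minimality of $s$, the operator $F_{\psi_2}^{*}$ is bijective on each $\mathcal{P}_n$ with $n<s$, so every such equation is solvable; this produces a nonzero $q$ with $F_\psi^{*}q=0$, contradicting the injectivity of $F_\psi^{*}$ and completing the proof that $F_{\psi_2}$ is bijective.

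I expect the main obstacle to be the bookkeeping that makes the descent rigorous: verifying that $\psi_2(D)$ really is the unique degree-preserving piece of $F_\psi^{*}$ on $\bigoplus_{n\le s}\mathcal{P}_n$, that the recursion is genuinely triangular (so the correction terms at degree $n$ involve only higher-degree unknowns), and that solvability at every stage is guaranteed by the minimality of $s$. The conceptual point to get right is that the degree-$2$ homogeneity of $\psi_2$ is exactly what forces $F_{\psi_2}$ to respect the grading and hence to be invertible on each finite-dimensional layer below $s$; without the multiplicity-$2$ hypothesis, which places the initial part in degree $2$, the leading piece of the adjoint would fail to be a graded isomorphism candidate and the descent would break down.
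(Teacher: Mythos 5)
Your argument is correct, but it reaches the bijectivity of $F_{\psi_2}$ by a genuinely different route from the paper. The paper works entirely on the "primal" side: it proves by induction on $N$ that $F_{\psi_2}:\mathbb{P}^{\leq N}\to\mathbb{P}^{\leq N}$ is bijective, using at each stage the surjectivity of $F_{\psi}$ to produce a preimage $q$ of a homogeneous $f_N$ and then pinning down the degree of the initial part $q_m$ of $q$ (the cases $m<N$ and $m>N$ are excluded by the induction hypothesis and by degree counting, respectively, leaving $f_N=\Delta(\psi_2 q_N)$). You instead dualize with respect to the Fischer inner product, convert surjectivity of $F_{\psi}$ into injectivity of $F_{\psi}^{*}=\psi(D)M_{|x|^2}$, and run a top-down descent that manufactures a nonzero element of $\ker F_{\psi}^{*}$ from a hypothetical minimal-degree element of $\ker F_{\psi_2}^{*}$. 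The two arguments exploit the same structural fact --- that $\psi_2$, being the initial part placed in degree $2$ by the multiplicity hypothesis, is the unique degree-preserving piece of the operator, so everything is triangular with $F_{\psi_2}$ (or its adjoint) on the diagonal --- and your recursion is essentially the transpose of the paper's induction. What the paper's version buys is economy: no inner product, no adjoints, no appeal to equality of nullities of adjoint endomorphisms; it needs only the grading of $\psi q$. What your version buys is a cleaner conceptual split (surjectivity becomes a kernel statement once and for all) and a formulation that transfers readily to other Fischer pairs $(P(D),\psi)$, at the cost of having to verify the standard properties of the Fischer pairing (positive definiteness, $M_{x_j}^{*}=\partial_j$, orthogonality of the graded pieces). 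Your reduction of the injectivity of $F_{\psi}$ to that of $F_{\psi_2}$ via initial parts coincides with the paper's final paragraph. All the bookkeeping you flag as a potential obstacle does check out: for $q^{(n)}\in\mathcal{P}_n$ one has $\psi_j(D)(|x|^2 q^{(n)})\in\mathcal{P}_{n+2-j}$, so the degree-$n$ component of $F_{\psi}^{*}q$ is $F_{\psi_2}^{*}(q^{(n)})+\sum_{j\geq 3}\psi_j(D)(|x|^2 q^{(n+j-2)})$, which is triangular in the required sense, and minimality of $s$ together with finite-dimensionality of $\mathcal{P}_n$ guarantees solvability at every stage.
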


\begin{proof}
Let $\mathbb{P}^{\leq N}$ be the space of all polynomials of degree $\leq N.$
We claim by induction over $N$ that the operator $F_{\psi_{2}}:\mathbb{P}%
^{\leq N}\rightarrow$ $\mathbb{P}^{\leq N}$ is a bijection. For $N=0,$ the
space $\mathbb{P}^{\leq0}$ consists of multiples of the constant
polynomials. By surjectivity of $F_{\psi}$ there exists a polynomial $q$
such that $1=\Delta\left( \psi q\right) .$ Let us write $q=q_{0}+....+q_{s}.$
Then 
\[
\psi q=\psi_{2}q_{0}+\hbox{higher order terms.} 
\]
Since $1=\Delta\left( \psi q\right) $ we infer that $1=\Delta\left(
\psi_{2}q_{0}\right) =F_{\psi_{2}}\left( q_{0}\right) .$ Thus $F_{\psi_{2}}:%
\mathbb{P}^{\leq0}\rightarrow\mathbb{P}^{\leq0}$ is surjective and therefore
bijective. Assume that the statement is true for $N-1$ and we want to prove
it for $N.$ By induction hypothesis and the linearity of $F_{\psi_{2}}:%
\mathbb{P}^{\leq N}\rightarrow$ $\mathbb{P}^{\leq N}$ it suffices to show
that for each non-zero homogeneous polynomial $f_{N}$ of degree $N$ there
exists $q\in\mathbb{P}^{\leq N}$ with $f_{N}=F_{\psi_{2}}q.$ Since $F_{\psi}$
is surjective there exists a polynomial $q$ such that $F_{\psi}\left(
q\right) =f_{N}.$ We write $q$ has a sum of homogeneous polynomials $q_{j}$,
say 
\[
q=q_{m}+....+q_{M} 
\]
where $q_{m}\neq0$ and $q_{M}\neq0.$ Then 
\begin{equation}
\psi q=\psi_{2}q_{m}+G_{m+3}+....+G_{t+N}  \label{eqGG}
\end{equation}
where $G_{m+j}$ are homogeneous polynomials of degree $m+j$ for $j=3,...,n$.
If $m<N$ then $\Delta\left( \psi_{2}q_{m}\right) =0$ since $\Delta\left(
\psi q\right) =f_{N}$. By induction hypothesis $F_{\psi_{2}}:\mathbb{P}%
^{\leq N-1}\rightarrow$ $\mathbb{P}^{\leq N-1}$ is bijective, so we conclude
that $q_{m}=0,$ a contradiction. Hence $m\geq N.$ If $m>N$ then all
homogeneous summands of $\psi q$ have degree at least $m+2>N,$ and from $%
\Delta\left( \psi q\right) =f_{N}$ we obtain that $f_{N}=0,$ a
contradiction. It follows that $m=N$ and $f_{N}=\Delta\left(
\psi_{2}q_{N}\right) .$ Thus $F_{\psi _{2}}$ is bijective.

Suppose that $F_{\psi}$ is not injective. Then there exists a polynomial $q$
such that $\Delta\left( \psi q\right) =0,$ so $\Delta\left(
\psi_{2}q_{m}\right) =0$ for some homogeneous polynomial $q_{m}\neq0.$ Since 
$F_{\psi_{2}}$ is injective we infer that $q_{m}=0,$ a contradiction.
\end{proof}

\begin{theorem}
\label{theorem2factor}Suppose that the polynomial $\psi$ of degree $\geq3$
has two non-constant polynomial factors, say $\psi=\psi^{\left( 1\right)
}\psi^{\left( 2\right) }.$ If there exists a zero $x_{0}$ of $\psi^{\left(
1\right) }$ of multiplicity $\geq2$ then $F_{\psi}$ is not surjective.
\end{theorem}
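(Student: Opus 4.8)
The plan is to argue by contradiction: assume $F_{\psi}$ is surjective and manufacture a nonzero polynomial in the kernel of a Fischer operator that is already known to be injective. The first step is to pin down the multiplicity of $\psi$ at $x_{0}$. The order of vanishing at a point is additive under multiplication (the lowest homogeneous term of the product of the Taylor expansions about $x_{0}$ is the product of the two lowest terms, and this product is nonzero because $\mathbb{R}\left[ x\right] $ is an integral domain), so the multiplicity of $\psi$ at $x_{0}$ equals the sum of the multiplicities of $\psi^{\left( 1\right) }$ and $\psi^{\left( 2\right) }$, hence is at least $2$. On the other hand, surjectivity of $F_{\psi}$ puts the constant function in its image, so Proposition \ref{propositionOrder2} forces every zero of $\psi$ to have multiplicity $\leq 2$. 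Combining the two bounds, the multiplicity of $\psi$ at $x_{0}$ is exactly $2$, which means $\psi^{\left( 1\right) }$ has multiplicity exactly $2$ at $x_{0}$ and $\psi^{\left( 2\right) }\left( x_{0}\right) \neq 0$.

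Next I would normalise and invoke the structure theorem. By Lemma \ref{LemTrans} we may translate so that $x_{0}=0$ and write $\psi=\psi_{2}+\cdots+\psi_{t}$ with $\psi_{2}\neq 0$. Since $\psi^{\left( 1\right) }$ has initial part a nonzero homogeneous quadratic $a_{2}$ and $\psi^{\left( 2\right) }\left( 0\right) =b_{0}\neq 0$, a short expansion of the product shows that the degree-$2$ part of $\psi$ is exactly $\psi_{2}=b_{0}\,a_{2}$. Theorem \ref{theoremMain} then applies and tells us that the assumed surjectivity of $F_{\psi}$ forces $F_{\psi_{2}}$ to be bijective, in particular injective.

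The contradiction comes from the harmonic-divisor side. Because $\psi=\psi^{\left( 1\right) }\psi^{\left( 2\right) }$ has two non-constant factors and $F_{\psi}$ is surjective, Proposition \ref{propositionHD} shows that $\psi^{\left( 1\right) }$ is a harmonic divisor; by the well-known proposition above that the initial part of a harmonic divisor is again a harmonic divisor, its quadratic initial part $a_{2}$ is a harmonic divisor. By the very definition of harmonic divisor there is a polynomial $q\neq 0$ with $a_{2}q$ harmonic, i.e. $F_{a_{2}}\left( q\right) =\Delta\left( a_{2}q\right) =0$. Since $\psi_{2}=b_{0}a_{2}$ with $b_{0}\neq 0$ we have $F_{\psi_{2}}\left( q\right) =b_{0}F_{a_{2}}\left( q\right) =0$ with $q\neq 0$, contradicting the injectivity of $F_{\psi_{2}}$. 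Hence $F_{\psi}$ cannot be surjective.

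I expect the only genuine subtlety to be the bookkeeping that links the three ingredients at the common point: one must check that the quadratic initial part $\psi_{2}$ governing Theorem \ref{theoremMain} is, up to the nonzero scalar $\psi^{\left( 2\right) }\left( x_{0}\right) $, precisely the degree-$2$ initial part of $\psi^{\left( 1\right) }$, so that being a harmonic divisor—which is exactly a nontriviality statement for the kernel of $F_{a_{2}}$—collides directly with the injectivity of $F_{\psi_{2}}$. The additivity of vanishing order and the reduction to the multiplicity-$2$ case are routine; the conceptual heart of the argument is the observation that "harmonic divisor'' is nothing but the failure of injectivity of the associated Fischer operator.
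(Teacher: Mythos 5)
Your proposal is correct and follows essentially the same route as the paper: reduce to multiplicity exactly $2$ via Proposition \ref{propositionOrder2}, translate to the origin, invoke Theorem \ref{theoremMain} to get bijectivity of $F_{\psi_{2}}$, and contradict it with the fact that the quadratic initial part of the harmonic divisor $\psi^{(1)}$ (Proposition \ref{propositionHD} plus the proposition on initial parts) is, up to the nonzero scalar $\psi^{(2)}(x_{0})$, equal to $\psi_{2}$. Your write-up is in fact slightly more careful than the paper's at two points: you justify the additivity of vanishing orders explicitly, and you cite the correct proposition (initial parts of harmonic divisors are harmonic divisors) where the paper's text points to Proposition \ref{propharDiv}.
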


\begin{proof}
Suppose that $F_{\psi }$ is surjective. Proposition \ref{propositionOrder2}
show that $x_{0}$ is a zero of $\psi $ of multiplicity $\leq 2.$ Since $\psi
^{\left( 1\right) }$ has a zero of multiplicity $2$ at $x_{0}$ it follows
that $\psi ^{\left( 2\right) }$ has not a zero at $x_{0}.$ We may assume
that $\psi ^{\left( 2\right) }\left( 0\right) =1.$ By Proposition \ref%
{propositionHD} $\psi ^{\left( 1\right) }$ is a harmonic divisor, and by
Proposition \ref{propharDiv} the initial part $\psi _{2}^{\left( 2\right) }$
of the homogeneous expansion of $\psi _{2}^{\left( 1\right) }$ is a harmonic
divisor. Since $\psi ^{\left( 2\right) }\left( 0\right) =1$ it follows that $%
\psi _{2}^{\left( 1\right) }$ is initial part of the homogeneous expansion
of $\psi $ which was denoted by $\psi _{2}.$ Thus $\psi _{2}$ is a harmonic
divisor which contradicts to the fact $F_{\psi _{2}}$ is bijective.
\end{proof}

Finally we present the proof of the main result of the paper:

\begin{theorem}
Suppose that the polynomial $\psi $ has at least three non-constant
polynomial factors and two of the factors have a common zero $x_{0}.$ Then $%
F_{\psi }$ is not surjective.
\end{theorem}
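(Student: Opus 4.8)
The plan is to reduce this statement directly to Theorem \ref{theorem2factor} by grouping the factors appropriately. Suppose the two factors sharing the common zero $x_{0}$ are $\psi _{1}$ and $\psi _{2}$, and write $\psi =\psi _{1}\cdots \psi _{N}$ with $N\geq 3$. I would set $\psi ^{\left( 1\right) }:=\psi _{1}\psi _{2}$ and $\psi ^{\left( 2\right) }:=\psi _{3}\cdots \psi _{N}$, so that $\psi =\psi ^{\left( 1\right) }\psi ^{\left( 2\right) }$. Because $N\geq 3$, the factor $\psi ^{\left( 2\right) }$ is a non-constant polynomial, and $\psi ^{\left( 1\right) }$ is non-constant as well; moreover $\deg \psi \geq 3$, since each of the three factors contributes degree at least one.

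The crucial step is to verify that $x_{0}$ is a zero of $\psi ^{\left( 1\right) }$ of multiplicity $\geq 2$. This follows from the product rule: since $\psi _{1}\left( x_{0}\right) =\psi _{2}\left( x_{0}\right) =0$, for every $j=1,\ldots ,d$ we have
\[
\frac{\partial }{\partial x_{j}}\left( \psi _{1}\psi _{2}\right) \left( x_{0}\right) =\left( \frac{\partial \psi _{1}}{\partial x_{j}}\,\psi _{2}\right) \left( x_{0}\right) +\left( \psi _{1}\,\frac{\partial \psi _{2}}{\partial x_{j}}\right) \left( x_{0}\right) =0,
\]
because each summand contains a factor vanishing at $x_{0}$. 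Together with $\psi ^{\left( 1\right) }\left( x_{0}\right) =0$ this shows that all partial derivatives of $\psi ^{\left( 1\right) }$ of order $\leq 1$ vanish at $x_{0}$, which by the definition of multiplicity means that $x_{0}$ is a zero of $\psi ^{\left( 1\right) }$ of multiplicity at least $2$.

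With this established, Theorem \ref{theorem2factor} applies verbatim to the factorization $\psi =\psi ^{\left( 1\right) }\psi ^{\left( 2\right) }$ and yields that $F_{\psi }$ is not surjective. I do not anticipate a serious obstacle: the entire content lies in the multiplicity computation above, and the hypothesis \emph{at least three factors} is precisely what guarantees that the complementary factor $\psi ^{\left( 2\right) }$ remains non-constant after two factors are absorbed into $\psi ^{\left( 1\right) }$. The only point requiring a moment of care is the degree bound $\deg \psi \geq 3$ demanded by Theorem \ref{theorem2factor}, but this is immediate from counting the three non-constant factors.
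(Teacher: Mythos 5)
Your proposal is correct and follows essentially the same route as the paper: the author likewise groups the two factors sharing the zero $x_{0}$ into a single factor with a zero of multiplicity $\geq 2$ there and invokes Theorem \ref{theorem2factor}. You merely spell out the product-rule computation for the multiplicity and the degree bound, which the paper leaves implicit.
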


\begin{proof}
Let $\psi =\varphi _{1}\varphi _{2}\varphi _{3}$ for some non-constant
polynomial $\varphi _{j}$ for $j=1,2,3.$ Suppose that $x_{0}$ is a common
zero of two factors, say $\varphi _{1}\left( x_{0}\right) =0$ and $\varphi
_{2}\left( x_{0}\right) =0.$ Then the product $\varphi _{1}\varphi _{2}$ has
a zero at $x_{0}$ of multiplicity $\geq 2.$ By Theorem \ref{theorem2factor}
the Fischer operator is not surjective.
\end{proof}

\section{On the conjecture (CS)}

The zero-set of a polynomial $f$ will be denoted by $Z\left( f\right)
=\left\{ x\in \mathbb{R}^{d}:f\left( x\right) =0\right\} .$ We say that $Z$
is an \emph{admissible common zero set} if there exists \emph{non-constant
irreducible} polynomials $f,g\in \mathbb{R}\left[ x\right] $ such that (i) $%
f\neq \lambda g$ for all $\lambda \in \mathbb{R}$ and (ii) $Z=Z\left(
f\right) \cap Z\left( g\right) .$ Now we define the class of domains for
which we can characterize the property (KS):

\begin{definition}
An open $\Omega$ in $\mathbb{R}^{d}$ is admissible if for any $x\in
\partial\Omega$, any open neighborhood $V$ of $x$ and for any finite family
of admissible common zero sets $Z_{1},...,Z_{r}$ the set 
\[
\left[ \partial\Omega\cap V\right] \diagdown\bigcup_{j=1}^{r}Z_{j} 
\]
is non-empty.
\end{definition}

For dimension $d=2$ it is easy to see that an open set $\Omega $ is
admissible if and only if each boundary point $x\in \partial \Omega $ is not
isolated in $\partial \Omega $ since an admissible common zero set is
finite. For arbitrary dimension $d$ it is intuitively clear that an
admissible common zero set has dimension $d-2.$ Thus if the boundary $%
\partial \Omega $ has dimension $d-1$ we expect that the domain $\Omega $ is
admissible. However, it seems to be difficult to formulate this as a precise
topological condition.

In \cite{Rend15} we presented the following result:

\begin{theorem}
\label{ThmI1}Let $\Omega$ be an open admissible subset of $\mathbb{R}^{d}$.
Then property (KS) holds for $\Omega$ if and only if there exists a
non-constant polynomial $\psi\in\mathbb{R}\left[ x\right] $ such that (i) $%
\partial\Omega\subset\psi^{-1}\left\{ 0\right\} $ and (ii) the Fischer
operator $F_{\psi}:$ $\mathbb{R}\left[ x\right] \rightarrow\mathbb{R}\left[ x%
\right] $ defined by 
\[
F_{\psi}\left( q\right) :=\Delta\left( \psi q\right) \hbox{ for }q\in\mathbb{%
R}\left[ x\right] 
\]
is surjective.
\end{theorem}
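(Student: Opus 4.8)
The plan is to prove both implications by reformulating each side as an algebraic decomposition of $\mathbb{R}[x]$ and then matching them. Write $\mathcal{H}$ for the space of harmonic polynomials and let $I=\{g\in\mathbb{R}[x]:g|_{\partial\Omega}=0\}$ be the ideal of all polynomials vanishing on $\partial\Omega$. Property (KS) says exactly that every polynomial agrees on $\partial\Omega$ with some harmonic polynomial, i.e. $\mathbb{R}[x]=\mathcal{H}+I$; on the other hand, Proposition \ref{connect} says that $F_\psi$ is surjective precisely when $\mathbb{R}[x]=\mathcal{H}+\psi\mathbb{R}[x]$. Since condition (i), $\partial\Omega\subseteq\psi^{-1}\{0\}$, is equivalent to $\psi\in I$ (hence to $\psi\mathbb{R}[x]\subseteq I$), the theorem reduces to the assertion that, for admissible $\Omega$, the ideal $I$ may be replaced by a principal ideal $\psi\mathbb{R}[x]$ generated by some $\psi\in I$.

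The direction $(\Leftarrow)$ is immediate and needs no admissibility: given $\psi$ as in (i)--(ii), any polynomial $p$ decomposes by Proposition \ref{connect} as $p=\psi q+h$ with $h$ harmonic, and since $\psi$ vanishes on $\partial\Omega$ we get $p=h$ on $\partial\Omega$, which is exactly (KS).

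For $(\Rightarrow)$ I first produce a non-constant element of $I$: applying (KS) to $|x|^2$ yields a harmonic $h$ with $|x|^2=h$ on $\partial\Omega$, so $\psi_0:=|x|^2-h\in I$, and $\psi_0$ is non-constant because $\Delta\psi_0=2d\neq0$. Hence I may choose $\psi\in I$ non-constant of \emph{minimal degree}. A radical argument shows $\psi$ is squarefree: its radical has the same zero set, so still lies in $I$, and has strictly smaller degree unless $\psi$ is already squarefree. Write $\psi=p_1\cdots p_k$ with the $p_i$ distinct irreducibles.

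The core of the proof, and the only place admissibility is used, is to show $I\subseteq\psi\mathbb{R}[x]$. Fix an index $i$. By minimality $\psi/p_i\notin I$, so there is $\xi_i\in\partial\Omega$ with $(\psi/p_i)(\xi_i)\neq0$; choosing a neighborhood $V_i$ of $\xi_i$ on which $\psi/p_i$ does not vanish, the factorization $\psi=p_i\cdot(\psi/p_i)$ forces $\partial\Omega\cap V_i\subseteq Z(p_i)$. Now take any non-constant $g\in I$ and suppose $p_i\nmid g$; factoring $g=g_1\cdots g_l$ into irreducibles, no $g_j$ is a scalar multiple of $p_i$, so each $Z(p_i)\cap Z(g_j)$ is an admissible common zero set. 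Since $g$ vanishes on $\partial\Omega$ we obtain $\partial\Omega\cap V_i\subseteq Z(p_i)\cap Z(g)=\bigcup_j\big(Z(p_i)\cap Z(g_j)\big)$, a finite union of admissible common zero sets, contradicting the admissibility of $\Omega$ at $\xi_i$. Hence $p_i\mid g$ for every $i$, and as the $p_i$ are distinct irreducibles, $\psi\mid g$. Thus every non-constant element of $I$ lies in $\psi\mathbb{R}[x]$; since $\partial\Omega\neq\emptyset$ the only constant in $I$ is $0$, so in fact $I=\psi\mathbb{R}[x]$. Combining this with $\mathbb{R}[x]=\mathcal{H}+I$ gives $\mathbb{R}[x]=\mathcal{H}+\psi\mathbb{R}[x]$, i.e. $F_\psi$ is surjective by Proposition \ref{connect}, while $\psi\in I$ gives $\partial\Omega\subseteq\psi^{-1}\{0\}$. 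The main obstacle is exactly this principality step: admissibility is precisely what forbids the boundary from being squeezed, near any of its points, into the intersection of two distinct irreducible hypersurfaces, and it is invoked to force each relevant irreducible factor of $\psi$ to divide every polynomial vanishing on $\partial\Omega$.
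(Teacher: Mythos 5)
The paper you were given never actually proves this theorem: it is imported from the companion paper \cite{Rend15} (``In \cite{Rend15} we presented the following result''), so there is no in-paper proof to measure your argument against. Judged on its own terms, your proof is correct and self-contained. Your translation of the two sides into $\mathbb{R}[x]=\mathcal{H}+I$ (for (KS)) and $\mathbb{R}[x]=\mathcal{H}+\psi\mathbb{R}[x]$ (for surjectivity of $F_{\psi}$, via Proposition \ref{connect}) is exact, the ($\Leftarrow$) direction indeed uses no admissibility, and the core step is sound: taking $\psi\in I$ non-constant of minimal degree (which exists because $|x|^{2}-h\in I$ has non-zero Laplacian) and squarefree by the radical argument, you show each irreducible factor $p_{i}$ traps a boundary piece $\partial\Omega\cap V_{i}$ inside $Z(p_{i})$, so any $g\in I$ with $p_{i}\nmid g$ would confine $\partial\Omega\cap V_{i}$ to the finite union $\bigcup_{j}\bigl(Z(p_{i})\cap Z(g_{j})\bigr)$ of admissible common zero sets, contradicting admissibility at $\xi_{i}$; hence $I=\psi\mathbb{R}[x]$ is principal and the two decompositions coincide. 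This principality of the vanishing ideal is exactly the role the definition of admissibility is engineered to play, so your route is very plausibly the intended one. One point to tighten: the claims ``$\psi/p_{i}\notin I$'' (needed also when $\psi/p_{i}$ is constant, i.e.\ $k=1$) and ``the only constant in $I$ is $0$'' both rest on $\partial\Omega\neq\emptyset$, which you invoke only at the end; state it at the outset. It is automatic for $\emptyset\neq\Omega\neq\mathbb{R}^{d}$ by connectedness of $\mathbb{R}^{d}$, but in the degenerate case $\partial\Omega=\emptyset$ your construction collapses ($I=\mathbb{R}[x]$ is not principal), and the conclusion then needs a separate one-line argument, e.g.\ taking $\psi=|x|^{2}$ and invoking Fischer's theorem.
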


It follows that for admissible domains the conjecture (CS) implies the
Khavinson-Shapiro conjecture.


\begin{thebibliography}{99}
\bibitem{AgKr00} M.L. Agranovsky, Y. Krasnov, \emph{Quadratic Divisors of
Harmonic polynomials in }$\mathbb{R}^{n},$ J. D'Anal. Math. 82 (2000),
379--395.

\bibitem{AgQu01} M.L. Agranovsky, E.T. Quinto, \emph{Geometry of stationary
sets for the wave equation in }$\mathbb{R}^{n}.$\emph{\ The case of finitely
supported initial data,} Duke Math. J. 107 (2001), 57--84.

\bibitem{AVZ99} M.L. Agranovsky, V.V. Volchkov, L.A. Zalcman, \emph{Conical
Uniqueness sets for the spherical Radon Transform,} Bull. London Math. Soc.
31 (1999), 231--236.

\bibitem{Arm} D. H. Armitage, \emph{The Dirichlet problem when the boundary
function is entire,} J. Math. Anal. Appl. 291 (2004), 565--577.

\bibitem{ArGa01} D. H. Armitage, S. J. Gardiner, \emph{Classical Potential
Theory}, Springer, London 2001.

\bibitem{Ax} S. Axler, P. Bourdon, W. Ramey, \emph{Harmonic Function Theory}%
, 2nd Edition, Springer, 2001.

\bibitem{AGV03} S. Axler, P. Gorkin, K. Voss, \emph{The Dirichlet problem on
quadratic surfaces,} Math. Comp. 73 (2003), 637--651.

\bibitem{Ba} J. A. Baker, \emph{The Dirichlet problem for ellipsoids,} Amer.
Math. Monthly, 106 (1999), 829--834.

\bibitem{B-C} M. Brelot, G. Choquet, \emph{Polynomes harmoniques et
polyharmoniques,} Second colloque sur les 
\'{}%
equations aux d%
\'{}%
eriv%
\'{}%
ees partielles, Bruxelles, (1954) 45--66.

\bibitem{ChSi01} M. Chamberland, D. Siegel, \emph{Polynomial solutions to
Dirichlet problems,} Proc. Amer. Math. Soc. 129 (2001), 211--217.

\bibitem{Eben92} P. Ebenfelt, \emph{Singularities encountered by the
analytic continuation of solutions to Dirichlet's problem,} Complex
Variables 20 (1992), 75--91.

\bibitem{FNS66} L. Flatto, D.J. Newman, H.S. Shapiro, \emph{The level curves
of harmonic functions,} Trans. Amer. Math. Soc. 123 (1966), 425--436.

\bibitem{Fischer} E. Fischer, \emph{\"{U}ber die Differentiationsprozesse
der Algebra}, J. f\"{u}r Math. (Crelle Journal) 148 (1917), 1--78.

\bibitem{Gard93} S.J. Gardiner, \emph{The Dirichlet problem with non-compact
boundary,} Math. Z. 213 (1993), 163--170.

\bibitem{HaSh94} L. Hansen, H.S. Shapiro, \emph{Functional Equations and
Harmonic Extensions,} Complex Variables, 24 (1994), 121--129.

\bibitem{KhLu10} D. Khavinson, E. Lundberg, \emph{The search for
singularities of solutions to the Dirichlet problem: recent developments},
Hilbert spaces of analytic functions, 121--132, CRM Proc. Lecture Notes, 51,
Amer. Math. Soc., Providence, RI, 2010.

\bibitem{KhLu14} D. Khavinson, E. Lundberg, \emph{A tale of ellipsoids in
Potential Theory,} Notices Amer. Math. Soc. 61 (2014), 148--156.

\bibitem{KhSh92} D. Khavinson, H. S. Shapiro,\emph{\ Dirichlet's Problem
when the data is an entire function,} Bull. London Math. Soc. 24 (1992),
456--468.

\bibitem{KhSt} D. Khavinson, N. Stylianopoulos, \emph{Recurrence relations
for orthogonal polynomials and algebraicity of solutions of the Dirichlet
problem, }to appear in \textquotedblleft Around the Research of Vladimir
Maz'ya II, Partial Differential Equations\textquotedblright, 219--228,
Springer, 2010.

\bibitem{Lund} E. Lundberg, \emph{Dirichlet's problem and complex lightning
bolts,} Comp. Meth. Function Theory 9 (2009), 111--125.

\bibitem{LuRe10} E. Lundberg, H. Render, \emph{The Khavinson-Shapiro
conjecture and polynomial decompositions,} J. Math. Anal. Appl. (2010),
506--513.

\bibitem{PutSty} M. Putinar, N. Stylianopoulos, \emph{Finite-term relations
for planar orthogonal polynomials}, Complex Anal. Oper. Theory 1 (2007),
447--456.

\bibitem{Rend08} H. Render, \emph{Real Bargmann spaces, Fischer
decompositions and sets of uniqueness for polyharmonic functions,} Duke
Math. J. 142 (2008), 313--352.

\bibitem{Rend15} H. Render, \emph{A characterization of the
Khavinson-Shapiro conjecture via Fischer operators,} submitted.

\bibitem{Shap89} H.S. Shapiro, \emph{An algebraic theorem of E. Fischer and
the Holomorphic Goursat Problem,} Bull. London Math. Soc. 21 (1989),
513--537.

\bibitem{Volk65} E.A. Volkov, \emph{On differential properties of solutions
to boundary value problems for the Laplace equation on polygons,} Trudy Mat.
Inst. Steklov [Proc. Steklov Inst. Math.], 77 (1965), 113--142.

\bibitem{Volk99} E.A. Volkov, \emph{On a property of solutions to the
Poisson equation on polygons,} Mathematical Notes 66 (1999), 139--141
(translated from Matematicheskie Zametki 66 (1999), 178--180).
\end{thebibliography}
\end{document}